\newtheorem{thm}{Theorem}[section]
\newtheorem{prop}[thm]{Proposition}
\newtheorem{remark}[thm]{Remark}
\newcommand{\R}{\mathbb R}
\newcommand{\Z}{\mathbb Z}
\newcommand{\N}{\mathbb N}
\newcommand{\C}{\mathbb C}
\journal{}
\begin{document}

\begin{frontmatter}

\title{ On off-critical zeros of lattice energies in the neighborhood of the Riemann zeta function }
\author{Laurent B\'{e}termin}
\address{Institut Camille Jordan, Universit\'e Claude Bernard Lyon 1,
69622 Villeurbanne, France} 

\author{Ladislav \v{S}amaj}
\author{Igor Trav\v{e}nec} 
\address{Institute of Physics, Slovak Academy of Sciences, 
D\'ubravsk\'a cesta 9, 84511 Bratislava, Slovakia}

\begin{abstract}
The Riemann zeta function $\zeta(s):= \sum_{n=1}^{\infty} 1/n^s$
can be interpreted as the energy per point of the lattice $\Z$,
interacting pairwisely via the Riesz potential $1/r^s$.
Given a parameter $\Delta\in (0,1]$, this physical model is generalized
by considering the energy per point $E(s,\Delta)$ of a periodic
one-dimensional lattice alternating the distances between the
nearest-neighbour particles as $2/(1+\Delta)$ and $2\Delta/(1+\Delta)$,
keeping the lattice density equal to one independently of $\Delta$.
This energy trivially satisfies $E(s,1)=\zeta(s)$ at $\Delta=1$,
it can be easily expressed as a combination of the Riemann and
Hurwitz zeta functions, and extended analytically to the punctured
$s$-plane $\C \setminus \{ 1\}$.  
In this paper, we perform numerical investigations of the zeros of
the energy $\{ \rho=\rho_x+{\rm i}\rho_y\}$, which are defined by
$E(\rho,\Delta)=0$. 
The numerical results reveal that in the Riemann limit $\Delta\to 1^-$
theses zeros include the anticipated critical zeros of the Riemann zeta
function with $\Re(\rho_x)=\frac{1}{2}$ as well as an unexpected
-- comparing to the Riemann Hypothesis --  infinite series of off-critical
zeros.
The analytic treatment of these off-critical zeros shows that their
imaginary components are equidistant and their real components diverge
logarithmically to $-\infty$ as $\Delta\to 1^-$, i.e., they become invisible
at the Riemann's $\Delta=1$.
\end{abstract}

\begin{keyword}
Riemann zeta function; Hurwitz zeta function; critical and off-critical zeros;
Riemann hypothesis
\MSC[2010]{11E45}
\end{keyword}

\end{frontmatter}

\section{Introduction and main results} \label{Sec1}
Let two  points  at distance $r$ interact via the Riesz potential 
$1/r^s$ with real $s$ \cite{Brau}. 
If the points are located on the lattice $\Z$ and interact pairwisely
by the Riesz potential  where $s>1$, the energy per point
is given by the Riemann zeta function \cite{Riemann1859}
\begin{equation} \label{Riemann}
\zeta(s) := \frac{1}{2} \sum_{n\in \Z^*} 
\frac{1}{\vert n\vert^s} = \sum_{n=1}^{\infty} \frac{1}{n^s}  \qquad s>1 ,
\end{equation}
where the prefactor $\frac{1}{2}$ is due to the fact that each interaction
energy is shared by a pair of  points.
The function  $\zeta$  can be analytically continued to the whole complex
$s$-plane, with a simple pole at $s=1$.
The Riemann zeta function plays a fundamental role in the algebraic and
analytic number theories
\cite{Hadamard93,Hardy14,Riesz16,Hardy21,Hutchinson25,Titchmarsh35,Selberg46},
see monographs \cite{Edwards74,Ivic85,Titchmarsh88}.
The  so-called Riemann Hypothesis about the location of its nontrivial zeros
exclusively on the critical line $\Re(s)=\frac{1}{2}$ (the symbol $\Re$ means 
the real part) is one of the Hilbert and Clay Millennium Prize problems
\cite{Jaffe06}.
Throughout the present paper we assume that the Riemann Hypothesis holds.
The Riemann zeta function and its Epstein's
\cite{Epstein03,Epstein07,Chowla49,Travenec22},
Hurwitz's \cite{Hurwitz1882,Spira76,Nakamura16}, Barnes's \cite{Barnes04}, etc.
generalisations have numerous applications both in mathematics
(prime numbers, applied statistics \cite{Borwein13}) and
in physics \cite{Elizalde12}.

\medskip

Let the Riemann zeta function be a member of a family of functions which
exhibit nontrivial zeros off the critical line.
Possible  mechanisms of the disappearance of these off-critical zeros
at the Riemann's point might of general interest.
In this paper, we propose a natural extension of the Riemann zeta function
as  the energy of a unit density lattice $L_\Delta$ with alternating distances
between the nearest neighbours, say $2/(1+\Delta)$ and
$2\Delta/(1+\Delta)$; due to the $\Delta\to 1/\Delta$ symmetry of
the problem, it is sufficient to restrict oneself to $\Delta$
from the interval $(0,1]$.

\medskip

In analogy with the original model with constant unit spacing, each  point 
interacts pairwisely with  the other points  via the Riesz interaction
$1/r^s$, $s>1$ and the lattice energy per point is therefore given
(see Proposition \ref{prop:energy}) by
$$
E(s,\Delta) = \frac{1}{2^s} \zeta(s) + \frac{1}{2^{s+1}}
\left[ \zeta\left(s,\frac{1}{1+\Delta}\right)
+\zeta\left(s,\frac{\Delta}{1+\Delta}\right) \right],\ s>1, \Delta\in (0,1],
$$
where
\begin{equation} \label{Hurwitz}
\zeta(s,a) = \sum_{n=0}^{\infty} \frac{1}{(n+a)^s} , \qquad s > 1,
\end{equation}
is the Hurwitz zeta function with the real (positive) parameter $a$.
Remark that this lattice energy, as a combination of Riemann and
Hurwitz zeta functions, has an analytic continuation on
$\C\backslash \{1\}$ (see Proposition \ref{prop:analytic}).
For given $\Delta\in (0,1]$, the set of zeros of the lattice energy
is defined as
$Z_\Delta:=\{\rho=\rho_x+{\rm i}\rho_y\in \C ,(\rho_x,\rho_y)\in \R^2 :
E(\rho,\Delta)=0\}$, noticing that $Z_1$ is the set of zeros of
the Riemann zeta function composed by trivial zeros (i.e., $\rho\in -2\N$)
and critical zeros (i.e., $\Re(\rho)=\frac{1}{2}$) assuming that
Riemann Hypothesis holds.
Furthermore, for specific values of the parameter
$\Delta\in\{1/5,1/3,1/2\}$, the energy can be factorized as
$E(s,\Delta)=f_\Delta(s)\zeta(s)$ where $f_\Delta$ is a sum of $p^s$ with
integers $p$.
This automatically gives us critical zeros (i.e. solutions of $\zeta(\rho)=0$
assuming the Riemann Hypothesis) and possible off-critical zeros
(i.e. solutions of $f_\Delta(\rho)=0$), as shown in
Proposition \ref{prop:Zspecialvalues}.

\medskip

The goal of this paper is to study, both numerically and analytically,
the set of zeros $Z_\Delta$ when $\Delta$ is in a neighborhood of $1$,
i.e. when $E(\cdot,\Delta)$ is in the neighborhood of
the Riemann zeta function.
Numerical and analytic analysis shows that approaching $\Delta\to 1^-$ the
zeros of $E(\rho,\Delta)$ involve the anticipated critical zeros of the Riemann
zeta function with $\Re(\rho_x)=\frac{1}{2}$ as well as an infinite series
of unexpected off-critical zeros with the following asymptotics for their
components, as $\Delta\to 1^-$ (see Theorem \ref{thm:asymptzeros}):
\begin{align*}
&\rho_x(\Delta)= \frac{2}{\ln 2} \ln(1-\Delta)+O(1-\Delta)\to -\infty , \\
&\rho_y(\Delta)= \frac{(2k+1)\pi}{\ln 2}+O\left((1-\Delta)^{\frac{2\ln 3}{\ln 2}}
\right), \quad k\in \Z.
\end{align*}
This means that, asymptotically, there is a infinite sequence of
equidistant zero components along the $\rho_y$ axis.
Furthermore, the divergence of $\rho_x$ to $-\infty$ as $\Delta\to 1^-$
is an example of the disappearance of off-critical zeros when approaching
the Riemann's point.
Moreover, the behavior of these zero components with respect to
$\Delta\in (0,1]$ is numerically studied (see Figures \ref{roy} and \ref{rox}).

\medskip

\textbf{Plan of the paper.} 
The generalized 1D model for Riesz points with alternating lattice
spacings is presented in section \ref{Sec2}. 
The energy per particle $E(s,\Delta)$ is expressed as a combination
of Hurwitz zeta functions in section \ref{Sec21}.
The properties of the Hurwitz zeta function are discussed in section
\ref{Sec22}.
Special values of the parameters $\Delta$ when the energy $E(s,\Delta)$
factorizes itself onto the product of the Riemann zeta function and
some simple function are given in section \ref{Sec23}.
Numerical results for zeros at any $0<\Delta<1$, together with tests
at the special values of $\Delta=1/5,1/3,1/2$ are presented in section
\ref{Sec3}.
The spectrum of critical and off-critical zeros in the Riemann's
limit $\Delta\to 1^-$ is discussed in section \ref{Sec4}.

\section{The generalized one-dimensional model} \label{Sec2}
\subsection{Definition of the model} \label{Sec21}

Given $\Delta\in (0,1]$, we consider the infinite set of points
$L_\Delta\subset \R$ given by
$$
L_\Delta:=2\Z \cup \left( 2\Z + \frac{2\Delta}{1+\Delta}\right),
$$
which is the unit density periodic configuration with alternating distances $2/(1+\Delta)$
and $2\Delta/(1+\Delta)$, since $2-2/(1+\Delta)=2\Delta/(1+\Delta)$. Assuming that each pair of points in $L_\Delta$ interacts via the Riesz potential $1/r^s$, $s>1$, the total energy per point of this system is therefore
$$
E(s,\Delta):=\frac{1}{4}\sum_{k\in \left\{0,\frac{2\Delta}{1+\Delta} \right\}}\sum_{p\in L_\Delta \atop p\neq k } \frac{1}{|p-k|^s}.
$$
The following proposition shows how to write this energy in terms of Riemann and Hurwitz zeta functions.
\begin{prop}\label{prop:energy}
For any $s>1$ and any $\Delta\in (0,1]$, we have
\begin{equation}\label{elin}
E(s,\Delta) = \frac{1}{2^s} \zeta(s) + \frac{1}{2^{s+1}}
\left[ \zeta\left(s,\frac{1}{1+\Delta}\right)
+\zeta\left(s,\frac{\Delta}{1+\Delta}\right) \right].
\end{equation}
\end{prop}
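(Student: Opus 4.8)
The plan is to evaluate the double sum defining $E(s,\Delta)$ directly by exploiting the two-sublattice structure of $L_\Delta$. Writing $L_\Delta = 2\Z \cup (2\Z + \delta)$ with $\delta := \frac{2\Delta}{1+\Delta}\in(0,2)$, and noting that for $s>1$ every series in sight converges absolutely (so all splittings and reindexings are legitimate), I would first treat the two base points $k=0$ and $k=\delta$ separately.

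For $k=0$ the inner sum $\sum_{p\in L_\Delta,\,p\neq 0} |p|^{-s}$ splits into the intra-sublattice part over $2\Z\setminus\{0\}$ and the inter-sublattice part over $2\Z+\delta$. Factoring $2^{-s}$ out of the first part and using $\sum_{n\neq 0}|n|^{-s}=2\zeta(s)$ gives $2^{1-s}\zeta(s)$. For the second part I would write $\sum_{n\in\Z}|2n+\delta|^{-s}$, split it at $n\geq 0$ versus $n\leq -1$, and pull $2^{-s}$ out of each half. The half $n\geq 0$ becomes $2^{-s}\sum_{n\geq 0}(n+\delta/2)^{-s}=2^{-s}\zeta(s,\delta/2)$ with $\delta/2=\frac{\Delta}{1+\Delta}$; reindexing the half $n\leq -1$ by $m=-n\geq 1$ turns $|2n+\delta|$ into $2m-\delta$ (positive because $\delta<2$), and after the shift $m\mapsto m-1$ it becomes $2^{-s}\zeta(s,1-\delta/2)$. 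The one identity to check here is $1-\delta/2=\frac{1}{1+\Delta}$, which is immediate.

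The computation for $k=\delta$ is the mirror image: the intra-sublattice part over $(2\Z+\delta)\setminus\{\delta\}$ again gives $2^{1-s}\zeta(s)$, while the inter-sublattice part $\sum_{n\in\Z}|2n-\delta|^{-s}$ produces the same pair of Hurwitz values with the roles of the two halves exchanged. Thus both base points contribute identically — a reflection symmetry of the alternating chain — so averaging over $k\in\{0,\delta\}$ merely reproduces the single-point contribution.

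Collecting terms, the prefactor $\tfrac14$ times the two equal contributions gives $\frac12\big[2^{1-s}\zeta(s)+2^{-s}(\zeta(s,\tfrac{1}{1+\Delta})+\zeta(s,\tfrac{\Delta}{1+\Delta}))\big]$, which is exactly the claimed formula \eqref{elin}. I expect no genuine obstacle beyond the bookkeeping of the negative-index terms: the one spot that requires care is the reindexing that converts the $n\leq -1$ tail of the shifted sublattice into a Hurwitz zeta with parameter $\frac{1}{1+\Delta}$ rather than $\frac{\Delta}{1+\Delta}$.
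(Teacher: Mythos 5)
Your proof is correct and takes essentially the same route as the paper's: a direct, absolutely convergent splitting of the double sum into intra- and inter-sublattice parts, with the negative-index halves reindexed (your shift $m\mapsto m-1$, the paper's $n=k+1$) into the Hurwitz values $\zeta\left(s,\frac{1}{1+\Delta}\right)$ and $\zeta\left(s,\frac{\Delta}{1+\Delta}\right)$, using $1-\frac{\delta}{2}=\frac{1}{1+\Delta}$. The only cosmetic difference is that you evaluate the two base points separately and invoke the reflection symmetry of the chain, whereas the paper combines their inter-sublattice sums before splitting; the bookkeeping and final collection of terms are identical.
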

\begin{proof}
We simply compute the above double sum as follows:
\begin{align*}
E(s,\Delta):&= \frac{1}{4}\sum_{k\in \left\{0,\frac{2\Delta}{1+\Delta}\right\}}
\sum_{p\in L_\Delta \atop p\neq k } \frac{1}{|p-k|^s} \\
& =\frac{1}{4}\sum_{k\in \left\{0,\frac{2\Delta}{1+\Delta}\right\}}
\left(\sum_{p\in 2\Z \atop p\neq k } \frac{1}{|p-k|^s} +
\sum_{p\in2\Z+ \frac{2\Delta}{1+\Delta} \atop p\neq k } \frac{1}{|p-k|^s} \right) \\
& =\frac{1}{4}\sum_{n\in \Z \atop n\neq 0} \frac{1}{|2n|^s}
+\frac{1}{4}\sum_{n\in \Z \atop 2n\neq  \frac{2\Delta}{1+\Delta}}
\frac{1}{\left| 2n- \frac{2\Delta}{1+\Delta} \right|^s} \\
& + \frac{1}{4}\sum_{n\in \Z \atop 2n\neq - \frac{2\Delta}{1+\Delta}}
\frac{1}{\left| 2n+ \frac{2\Delta}{1+\Delta} \right|^s}
+\frac{1}{4}\sum_{n\in \Z \atop n\neq 0} \frac{1}{|2n|^s}\\
& =\frac{1}{2^{s+1}}\sum_{n\in \Z^*} \frac{1}{|n|^s}
+\frac{1}{2^{s+2}}\sum_{n\in \Z}
\frac{1}{\left| n+ \frac{\Delta}{1+\Delta} \right|^s}+\frac{1}{2^{s+2}}
\sum_{n\in \Z}\frac{1}{\left| n- \frac{\Delta}{1+\Delta} \right|^s}.
\end{align*}
We now split the two last sums in order to get two Hurwitz zeta functions and two rests that we write again in terms of the same Hurwitz zeta functions:
\begin{align*}
E(s,\Delta):&=\frac{1}{2^{s}}\zeta(s)
+\frac{1}{2^{s+2}}\left[ \zeta\left(s,\frac{1}{1+\Delta}\right)
+\zeta\left(s,\frac{\Delta}{1+\Delta}\right)\right] \\
&\qquad+\frac{1}{2^{s+2}}\left(\sum_{n=1}^{+\infty}
\frac{1}{\left| -n+ \frac{\Delta}{1+\Delta} \right|^s}
+\sum_{n=1}^{+\infty}\frac{1}{\left| -n+ \frac{1}{1+\Delta} \right|^s} \right)\\
&=\frac{1}{2^{s}}\zeta(s)+\frac{1}{2^{s+2}}
\left[ \zeta\left(s,\frac{1}{1+\Delta}\right)
+\zeta\left(s,\frac{\Delta}{1+\Delta}\right)\right] \\
& \qquad+\frac{1}{2^{s+2}}\left(\sum_{n=1}^{+\infty}
\frac{1}{\left( n- \frac{\Delta}{1+\Delta} \right)^s}
+\sum_{n=1}^{+\infty}\frac{1}{\left( n- \frac{1}{1+\Delta} \right)^s} \right).
\end{align*}
Since we have, by the change of variables $n=k+1$,
$$
\sum_{n=1}^{+\infty}\frac{1}{\left( n- \frac{\Delta}{1+\Delta} \right)^s}
=\sum_{k=0}^{+\infty} \frac{1}{\left(k+\frac{1}{1+\Delta} \right)^s}
\ \textnormal{and}\
\sum_{n=1}^{+\infty}\frac{1}{\left( n- \frac{1}{1+\Delta} \right)^s}
=\sum_{k=0}^{+\infty} \frac{1}{\left(k+\frac{\Delta}{1+\Delta} \right)^s},
$$
we obtain
\begin{align*}
E(s,\Delta)&=\frac{1}{2^{s}}\zeta(s)+\frac{2}{2^{s+2}}
\left[ \zeta\left(s,\frac{1}{1+\Delta}\right)
+\zeta\left(s,\frac{\Delta}{1+\Delta}\right)\right] \\
&=\frac{1}{2^{s}}\zeta(s)+\frac{1}{2^{s+1}}
\left[ \zeta\left(s,\frac{1}{1+\Delta}\right)
+\zeta\left(s,\frac{\Delta}{1+\Delta}\right)\right]
\end{align*}
and the proof is complete.
\end{proof}
Notice that the energy satisfies the required symmetry relation
$E(s,\Delta) = E(s,1/\Delta)$.

\begin{remark}[\textbf{Crystallization result}]
It is known (see e.g. {\rm \cite{Ventevogel}}), by a convexity argument
(or by the so-called one-dimensional ``Universal Optimality" of $\Z$,
see {\rm \cite{CohnKumar}}) that, for all $s>0$,
$$
\min_{\Delta\in (0,1]} E(s,\Delta)=E(s,1)=\zeta(s),
$$
with equality if and only if $\Delta=1$. From our results (see Theorem \ref{thm:asymptzeros}), the Riemann zeta function is therefore at the same time the minimal value of our
energy and the only one in its $\Delta$-neighborhood for which the non-trivial zeros are strictly
located on the critical line $\Re(s)=\frac{1}{2}$. It might be interesting to investigate other lattice energies to understand how universal this phenomenon is.
\end{remark}

\subsection{The Hurwitz zeta function and the analytic continuation of $E(\cdot, \Delta)$}\label{Sec22}
The Hurwitz zeta function (\ref{Hurwitz}) is a generalization of
the Riemann zeta-function (\ref{Riemann}) via a shift $a>0$.
In particular,
\begin{equation}
\zeta(s,1) = \zeta(s).
\end{equation}
In the symbolic computer language {\it Mathematica}, the Riemann
and Hurwitz zeta functions are tabulated under the symbols Zeta[$s$]
and Zeta[$s,a$], respectively.

The Hurwitz zeta function satisfies two easily verifiable important equalities,
\begin{equation} \label{rel1}
\forall x\in \left[ 0, \frac{1}{2}\right], \forall s>1,
\quad \zeta(s,x) + \zeta(s,1/2+x) = 2^s \zeta(s,2x) ,
\end{equation}  
and the multiplication theorem
\begin{equation}\label{relgeneral}
k\in \N,\forall s>1, \quad k^s \zeta(s)
=\sum_{n=1}^k\zeta\left( s, \frac{n}{k}\right).
\end{equation}
The second relation easily implies that
\begin{equation} \label{rel2}
\zeta(s,1/3) + \zeta(s,2/3) = (3^s-1) \zeta(s)
\end{equation}
as well as
\begin{equation} \label{half}
\zeta(s,1/2) = (2^s-1) \zeta(s).
\end{equation}
From \eqref{rel1} with $x=1/4$ and \eqref{half}, we therefore obtain
\begin{equation}\label{rel2bis}
\zeta(s,1/4)+\zeta(s,3/4)=(4^s-2^s)\zeta(s).
\end{equation}

From \eqref{elin} and \eqref{half}, it is also straightforward to check
that $E(s,1)=\zeta(s)$.

\medskip

Furthermore, it is clear that \eqref{elin} holds for all $s\in \C$ such
that $\Re(s)>1$ and we get the following result by directly applying
the classical one (see e.g. \cite{Fine51,Hurwitz1882,Travenec22})
on the analytic continuation of the Riemann and Hurwitz zeta functions.

\begin{prop}\label{prop:analytic}
For all $\Delta\in (0,1]$, the function $s\mapsto E(s,\Delta)$ has
an analytic continuation on $\C\backslash \{1\}$.
Furthermore, we have, for all $s$ such that $\Re(s)<1$ and all
$\Delta\in (0,1]$, 
\begin{align*}
&\pi^{-\frac{s}{2}}\Gamma\left(\frac{s}{2} \right)E(s,\Delta) \\
&= \frac{1}{2^{s+1}}\left\{
\int_0^\infty \left[\vartheta\left(\frac{1}{1+\Delta},it\right)-1\right]
t^{\frac{-1-s}{2}} {\rm d}t + f(s) \right\} ,
\end{align*}
where $\displaystyle \vartheta(z,it)=\sum_{n\in \Z} e^{-\pi n^2 t}e^{2i\pi n z}$
is the Jacobi theta function defined for $t>0$ and $z\in \C$, and
\begin{align*}
f(s) &= \int_0^\infty \left[ \vartheta(0,it)-1-\frac{1}{\sqrt{t}} \right]
t^{\frac{s}{2}-1}{\rm d}t \qquad \mbox{for $0<\Re(s)<1$,} \\
&= \int_0^\infty \left[ \vartheta(0,it)-\frac{1}{\sqrt{t}} \right]
t^{\frac{s}{2}-1}{\rm d}t \qquad \mbox{for $\Re(s)<0$.}
\end{align*}
\end{prop}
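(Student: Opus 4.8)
The plan is to reduce everything to the classical theta-function continuation, applied separately to the Riemann and to the Hurwitz parts of \eqref{elin}, and then to reassemble. The analytic continuation claim is immediate: the prefactors $2^{-s}$ and $2^{-s-1}$ are entire and nonvanishing, while both $\zeta(s)$ and each $\zeta(s,a)$ continue meromorphically to $\C\setminus\{1\}$ with a single simple pole at $s=1$ of residue $1$; hence so does $E(\cdot,\Delta)$. As a consistency check one may note that the residues combine as $\frac12\cdot 1+\frac14(1+1)=1$, so $E(\cdot,\Delta)$ has exactly the same simple pole as $\zeta$.

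For the integral representation I would first record the lattice-sum form of the two building blocks. Setting $a=\frac{1}{1+\Delta}$, so that $\frac{\Delta}{1+\Delta}=1-a$, a shift of summation index gives, for $\Re(s)>1$,
\[
\zeta(s,a)+\zeta(s,1-a)=\sum_{m\in\Z}\frac{1}{|m+a|^s},\qquad 2\zeta(s)=\sum_{n\in\Z,\,n\neq 0}\frac{1}{|n|^s}.
\]
Inserting the Gamma--Mellin identity $|x|^{-s}=\frac{\pi^{s/2}}{\Gamma(s/2)}\int_0^\infty t^{s/2-1}e^{-\pi x^2 t}\,dt$ termwise (interchange justified by absolute convergence for $\Re(s)>1$) turns these into theta integrals,
\[
\pi^{-s/2}\Gamma\!\left(\tfrac s2\right)\big[\zeta(s,a)+\zeta(s,1-a)\big]=\int_0^\infty t^{s/2-1}\,\Theta_a(t)\,dt,\quad \Theta_a(t):=\sum_{m\in\Z}e^{-\pi(m+a)^2 t},
\]
together with the analogous $\pi^{-s/2}\Gamma(\tfrac s2)\cdot 2\zeta(s)=\int_0^\infty t^{s/2-1}[\vartheta(0,it)-1]\,dt$.

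The core of the argument is to push these below $\Re(s)=1$ using the Jacobi transformation $\Theta_a(t)=t^{-1/2}\vartheta(a,i/t)$ (Poisson summation), which also explains why the symmetric combination $a\leftrightarrow 1-a$ produces precisely the real theta function $\vartheta(a,\cdot)$. For the Hurwitz piece I would substitute $u=1/t$, split the integral at $u=1$, and peel off the constant limit $\vartheta(a,iu)\to 1$ as $u\to\infty$. The two elementary boundary integrals generated by the split evaluate to $\frac{2}{s-1}$ and $\frac{2}{1-s}$, which cancel, leaving the clean statement
\[
\pi^{-s/2}\Gamma\!\left(\tfrac s2\right)\big[\zeta(s,a)+\zeta(s,1-a)\big]=\int_0^\infty\big[\vartheta(a,it)-1\big]\,t^{(-1-s)/2}\,dt,\qquad \Re(s)<1.
\]
For the Riemann piece the identical split is the textbook computation: subtracting the singular part $t^{-1/2}$ of $\vartheta(0,it)$ near $t=0$ yields a representation convergent on $0<\Re(s)<1$, and dropping the constant $-1$ extends it to $\Re(s)<0$; both integrals represent $\pi^{-s/2}\Gamma(\tfrac s2)\cdot 2\zeta(s)$, i.e. $f(s)$. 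Writing $\pi^{-s/2}\Gamma(\tfrac s2)E(s,\Delta)=\frac{1}{2^{s+1}}\{\pi^{-s/2}\Gamma(\tfrac s2)\,2\zeta(s)+\pi^{-s/2}\Gamma(\tfrac s2)[\zeta(s,a)+\zeta(s,1-a)]\}$ and substituting the two continuations gives the claimed identity with $a=\frac1{1+\Delta}$.

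The main obstacle is not a single estimate but the bookkeeping: one must keep track of the precise half-plane in which each elementary integral $\int t^{\alpha}\,dt$ converges, confirm the cancellation $\frac{2}{s-1}+\frac{2}{1-s}=0$ that collapses the Hurwitz split into a single integral over $(0,\infty)$, and verify that the two integral formulas for $f(s)$ are restrictions of one analytic function across $\Re(s)=0$, where $f$ inherits a simple pole from $\Gamma(\tfrac s2)$. Justifying the termwise Mellin transform and the Poisson summation step should be stated explicitly, but beyond this the result is, as the text indicates, an assembly of the classical theta-based continuations of $\zeta$ and of the Hurwitz zeta function.
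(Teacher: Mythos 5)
Your proposal is correct and takes essentially the same route as the paper's proof: split $\pi^{-s/2}\Gamma\left(\frac{s}{2}\right)E(s,\Delta)$ into the Riemann block $2\pi^{-s/2}\Gamma\left(\frac{s}{2}\right)\zeta(s)=f(s)$ and the symmetric Hurwitz block $\zeta(s,z)+\zeta\left(s,1-z\right)$ with $z=\frac{1}{1+\Delta}$, and continue each via its classical theta-integral representation. The only difference is presentational --- the paper simply cites these representations (the Fine--Hurwitz--Apostol formula for the symmetric Hurwitz pair and Eqs.~(21)--(22) of \cite{Travenec22} for $f$), whereas you re-derive them from the Mellin identity, Poisson summation, and the split-at-one cancellation $\frac{2}{s-1}+\frac{2}{1-s}=0$, all of which you handle correctly (including the residue check at $s=1$ and the pole of $f$ at $s=0$).
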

\begin{proof}
Recall that, for $\Delta\in (0,1]$ and $\Re(s)>1$, we have
$$
E(s,\Delta) = \frac{1}{2^s} \zeta(s) + \frac{1}{2^{s+1}}
\left[ \zeta\left(s,\frac{1}{1+\Delta}\right)
+\zeta\left(s,\frac{\Delta}{1+\Delta}\right) \right].
$$
It has been shown in \cite{Fine51,Hurwitz1882,Travenec22} that
$s\mapsto \zeta(s)$ and $s\mapsto \zeta(s,a)$, $a>0$, admit an analytic
continuation to $\C\backslash \{1\}$, which implies the same for
$s\mapsto E(s,\Delta)$.
Furthermore, writing $z=\frac{1}{1+\Delta}$, we have
$$
E(s,\Delta)= \frac{1}{2^s} \zeta(s) + \frac{1}{2^{s+1}}
\left[ \zeta\left(s,z\right)
+\zeta\left(s,1-z\right) \right] .
$$
Considering the analytic continuation of $\zeta(1-\alpha,a)$, $a>0$,
the following formula is well-known \cite{Fine51,Hurwitz1882} for all
$\alpha$ such that $\Re(\alpha)>0$: 
\begin{equation}\label{eq:zetatheta}
\pi^{-\frac{1-\alpha}{2}}\Gamma\left( \frac{1-\alpha}{2}\right)
\left[ \zeta\left(1-\alpha,z\right)
+\zeta\left(1-\alpha,1-z\right) \right]
=\int_0^\infty[\vartheta(z,it)-1]t^{\frac{\alpha}{2}}\frac{{\rm d}t}{t}
\end{equation}
holding for $z\notin \Z$ (see the remark in Theorem 12.6 on page 257
of \cite{Apostol76}).
Replacing $\alpha$ by $s=1-\alpha$ (so that $\Re(s)<1$ for $\Re(\alpha)>0$),
we write 
\begin{align*}
\pi^{-\frac{s}{2}}\Gamma\left(\frac{s}{2} \right)E(s,\Delta)
&= \frac{\pi^{-\frac{s}{2}}}{2^s}\Gamma\left(\frac{s}{2} \right)\zeta(s)
+ \frac{\pi^{-\frac{s}{2}}}{2^{s+1}}\Gamma\left(\frac{s}{2} \right)
\left[ \zeta\left(s,z\right) +\zeta\left(s,1-z\right) \right]\\
&=\frac{2\pi^{-\frac{s}{2}}}{2^{s+1}}\Gamma\left(\frac{s}{2} \right)\zeta(s)
+ \frac{\pi^{-\frac{s}{2}}}{2^{s+1}}\Gamma\left(\frac{s}{2} \right)
\left[ \zeta\left(s,z\right) +\zeta\left(s,1-z\right) \right] .
\end{align*}
Next we replace
$\pi^{-\frac{s}{2}}\Gamma\left(\frac{s}{2} \right)\left[ \zeta\left(s,z\right)
+\zeta\left(s,1-z\right) \right]$
by the integral given in \eqref{eq:zetatheta} and
$2\pi^{-\frac{s}{2}}\Gamma\left(\frac{s}{2} \right)\zeta(s)$
by the $d=1$ integral in Eq. (21) of \cite{Travenec22} for $0<\Re(s)<1$ and
by the $d=1$ integral in Eq. (22) of \cite{Travenec22} for $\Re(s)<0$
to complete the proof.
\end{proof}
Therefore, we can consider the zeros of $s\mapsto E(s,\Delta)$
in $\C\backslash \{1\}$ defined as
$$
Z_\Delta:=\{\rho=\rho_x+{\rm i}\rho_y\in \C ,(\rho_x,\rho_y)\in \R^2 : E(\rho,\Delta)=0\},
$$
noticing that $Z_1$ is the set of zeros of the Riemann zeta function.
We recall that, according to the Riemann Hypothesis,
$$
Z_1=-2\N\cup Z^C, \quad Z^C\subset \{\Re(z)=1/2\},
$$
where $Z^C$ is called the set of critical zeros of $\zeta$ and $-2\N$ are the trivial zeros of $\zeta$. We are going to see in the next sections that $Z_\Delta$ can have other nontrivial off-critical zeros.

\subsection{Factorization and zeros of the energy for special values of $\Delta$}
\label{Sec23}
There exist special values of $\Delta$ for which the energy $E(s,\Delta)$
factorizes itself into a product of the Riemann zeta function $\zeta(s)$ and
some simple functions of $s$, by using the previously presented relations
(\ref{rel1}) and (\ref{rel2}).
For these cases, both critical and off-critical zeros can be found easily.
The most obvious choice of $\Delta$ is $\Delta=1$ for which we have
\begin{equation} \label{d1}
E(s,1)=\zeta(s).
\end{equation}
In the cases $\Delta\in \left\{\frac{1}{2}, \frac{1}{3} \right\}$ we have the following result giving the zeros of $E(s,\Delta)$ as well as the factorization of the energy.
\begin{prop}\label{prop:Zspecialvalues}
For all $s\in \C\backslash \{1\}$, we have
\begin{equation}\label{eq:facto1213}
E(s,1/2)=\frac{1}{2^{s+1}}(1+3^s)\zeta(s) \quad \textnormal{and}\quad
E(s,1/3)=\frac{1}{2^{s+1}}(2-2^s+4^s) \zeta(s) .
\end{equation}
Furthermore, the zeros of $E(s,1/2)$ and $E(s,1/3)$ are 
\begin{align*}
&Z_{\frac{1}{2}}=Z_1 \cup
\left\{\frac{(2k+1) {\rm i} \pi}{\ln 3}\right\}_{k\in \Z },\\
& Z_{\frac{1}{3}}=Z_1\cup \left\{\frac{1}{\ln{2}}
\left[ \ln \left( \frac{1\pm {\rm i}\sqrt{7}}{2}\right)
+2 \pi {\rm i} k\right] \right\}_{k\in \Z}.
\end{align*}
\end{prop}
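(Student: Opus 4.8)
The plan is to derive the two factorizations directly from the representation \eqref{elin} of Proposition \ref{prop:energy} together with the Hurwitz identities already recorded, and then to read off the zeros from the resulting product structure. First I would specialize \eqref{elin} to each value of $\Delta$. For $\Delta=1/2$ the Hurwitz arguments become $\frac{1}{1+\Delta}=\frac{2}{3}$ and $\frac{\Delta}{1+\Delta}=\frac{1}{3}$, so that
$$
E(s,1/2)=\frac{1}{2^s}\zeta(s)+\frac{1}{2^{s+1}}\left[\zeta(s,1/3)+\zeta(s,2/3)\right],
$$
and \eqref{rel2} collapses the bracket to $(3^s-1)\zeta(s)$; writing the first term as $\frac{2}{2^{s+1}}\zeta(s)$ and combining over the common denominator gives $E(s,1/2)=\frac{1}{2^{s+1}}(1+3^s)\zeta(s)$. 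For $\Delta=1/3$ the arguments are $\frac{3}{4}$ and $\frac{1}{4}$, and \eqref{rel2bis} turns $\zeta(s,1/4)+\zeta(s,3/4)$ into $(4^s-2^s)\zeta(s)$, which in the same manner yields $E(s,1/3)=\frac{1}{2^{s+1}}(2-2^s+4^s)\zeta(s)$.

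These computations are legitimate a priori only for $\Re(s)>1$, where the Hurwitz series converge. To extend the identities to the full punctured plane I would invoke analyticity: by Proposition \ref{prop:analytic} each left-hand side is holomorphic on $\C\backslash\{1\}$, while each right-hand side is the product of the entire function $\frac{1}{2^{s+1}}(1+3^s)$ (respectively $\frac{1}{2^{s+1}}(2-2^s+4^s)$) with $\zeta$, hence meromorphic on $\C$ with its only possible singularity at $s=1$. Since both sides agree on the open set $\Re(s)>1$, the identity theorem forces agreement on the connected domain $\C\backslash\{1\}$, establishing \eqref{eq:facto1213} for all $s\in\C\backslash\{1\}$.

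Next I would deduce the zero sets from the factored forms. Because $2^{-(s+1)}$ never vanishes, a point $\rho\in\C\backslash\{1\}$ lies in $Z_\Delta$ precisely when $\zeta(\rho)=0$ or the exponential factor vanishes; the former contributes exactly $Z_1$. For $\Delta=1/2$ I would solve $3^\rho=-1$: writing $3^\rho=e^{\rho\ln 3}$ and $-1=e^{(2k+1){\rm i}\pi}$ gives $\rho=\frac{(2k+1){\rm i}\pi}{\ln 3}$, $k\in\Z$. For $\Delta=1/3$ the substitution $u=2^\rho$ (so $4^\rho=u^2$) turns $2-2^\rho+4^\rho=0$ into the quadratic $u^2-u+2=0$ with roots $u=\frac{1\pm{\rm i}\sqrt 7}{2}$; inverting $2^\rho=u$ through the complex logarithm and allowing for its $\frac{2\pi{\rm i}}{\ln 2}$-periodicity produces $\rho=\frac{1}{\ln 2}\left[\ln\!\left(\frac{1\pm{\rm i}\sqrt 7}{2}\right)+2\pi{\rm i}k\right]$, $k\in\Z$.

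The one point demanding genuine care is the complete enumeration of the complex solutions of $3^\rho=-1$ and $2^\rho=u$: since the complex exponential is periodic, each equation carries a full $\Z$-indexed family of solutions, and one must capture every branch of the logarithm rather than a single principal value. I would also stress that the asserted set equalities hold as written independently of whether the two families of points intersect, because they merely express that a product of holomorphic factors vanishes exactly where one of the factors does. Everything else reduces to the elementary algebra above, so I expect no substantive difficulty beyond this bookkeeping of branches.
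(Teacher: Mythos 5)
Your proposal is correct and follows essentially the same route as the paper: specializing \eqref{elin} to $\Delta=1/2$ and $\Delta=1/3$, collapsing the Hurwitz sums via \eqref{rel2} and \eqref{rel2bis}, and solving $3^\rho=-1$ and the quadratic $u^2-u+2=0$ with $u=2^\rho$ over all branches of the logarithm. Your explicit appeal to the identity theorem to propagate the factorizations from $\Re(s)>1$ to $\C\backslash\{1\}$ is a point the paper leaves implicit, but it is a refinement of the same argument rather than a different approach.
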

\begin{proof}
For $\displaystyle\Delta=\frac{1}{2}$, one has
$\displaystyle\frac{1}{1+\Delta}=\frac{2}{3}$ and
$\displaystyle\frac{\Delta}{1+\Delta}=\frac{1}{3}$ and therefore,
using \eqref{rel2}, it holds that
\begin{equation} \label{d2}
E(s,1/2)=\frac{1}{2^{s+1}}(1+3^s)\zeta(s).
\end{equation}
where the function $1+3^s$ yields an infinite sequence of (purely imaginary)
off-critical zeros
\begin{equation} \label{zeros2}
\rho_k = \frac{(2k+1){\rm i}\pi}{\ln{3}}, \quad k\in \Z.
\end{equation}
Furthermore, for $\displaystyle\Delta=\frac{1}{3}$, we have
$\displaystyle\frac{1}{1+\Delta}=\frac{3}{4}$ and
$\displaystyle\frac{\Delta}{1+\Delta}=\frac{1}{4}$ and therefore,
applying \eqref{rel2bis}, it holds that
\begin{equation} \label{d3}
E(s,1/3)=\frac{1}{2^{s+1}}(2-2^s+4^s) \zeta(s).
\end{equation}
The function $2-2^s+4^s$ yields the following zeros
\begin{equation} \label{zeros3}
\rho_k=\frac{1}{\ln{2}} \left[ \ln \left( \frac{1\pm {\rm i}\sqrt{7}}{2}\right)
+2 \pi {\rm i} k\right] , \quad k\in \Z.
\end{equation}
\end{proof}
\begin{remark}
In the $\Delta=1/3$ case, since all these zeros have the real part equal to $\frac{1}{2}$, the energy for $\Delta=1/3$ exhibits only critical zeros.
\end{remark}

The last factorization we are considering in our paper corresponds to $\Delta=1/5$:
\begin{equation} \label{d5}
E(s,1/5)=\frac{1}{2^{s+1}}(3-2^s-3^s+6^s)\zeta(s).
\end{equation}
The function $3-2^s-3^s+6^s$ exhibits only off-critical zeros which can be
found only numerically, e.g., $s\approx 0.635084\pm 1.07885{\rm i}$.

\section{Numerical results} \label{Sec3}
The starting point of our numerical determination of zeros of the energy
$E(s,\Delta)$ was the case $\Delta=1/2$, with the factorization form \eqref{eq:facto1213},
whose spectrum of zeros involves both the critical zeros of the Riemann
zeta function as well as an infinite set of off-critical zeros (\ref{zeros2}).
It was checked that the accuracy of determination of complex zeros by using
the symbolic language {\it Mathematica} is 34-35 decimal digits for both real
and imaginary components.

Then we proceeded to the left and right from this point $\Delta=1/2$ by
changing successively $\Delta$ by a small amount to avoid an uncontrolled
skip between neighbouring branches of zeros.
Our numerical experience indicates that changing $\Delta$ by 0.01 is
certainly safe from this point of view. 
We observe the following:
\begin{itemize}
\item The zeros of $E(s,\Delta)$ form continuous non-crossing curves in the
complete $(\rho_x, \rho_y, \Delta)$ space. 
\item Nevertheless, the curves may intersect in the reduced spaces
$(\rho_y,\Delta)$ and $(\rho_x,\Delta)$, see Figures \ref{roy} and \ref{rox},
respectively.
\end{itemize}
Notice that the values of $\Delta\in \{1/5,1/3,1/2\}$ serve as
test points of our numerical calculations.

\begin{figure}[t]
\centering
\includegraphics[clip,width=0.75\textwidth]{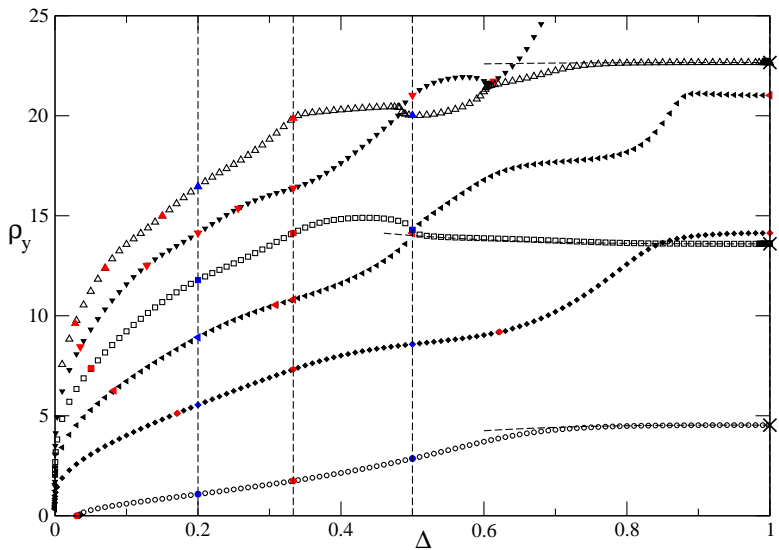}
\caption{Imaginary parts $\rho_y$ of the energy zeros $E(\rho,\Delta)=0$.
The special cases $\Delta\in \{1/5,1/3,1/2,1\}$ are visualized by vertical
dashed lines.
The points which correspond to critical zeros with $\rho_x=\frac{1}{2}$
are denoted by red colour.}
\label{roy}
\end{figure}

The dependence of the imaginary component of zeros $\rho_y$, in the range
of its values $[0,25]$, on the parameter $\Delta\in (0,1]$ is pictured
in Figure~\ref{roy}.
The special cases $\Delta\in \{1/5,1/3,1/2\}$, when the energy factorizes
itself onto a product of the Riemann zeta function and a simple function,  
are visualized by vertical dashed lines.
These cases yield us precise values of the corresponding zeros and also make
us sure not to miss any curve of zeros. 
The critical zeros with $\rho_x=\frac{1}{2}$ are denoted by red colour,
all other zeros are off-critical; the off-critical zeros for the special
values of $\Delta\in \{1/5,1/2\}$ are denoted by blue colour. We observe two types of zero curves:
\begin{itemize}
\item The first three ``standard'' zero curves $\rho_y(\Delta)$, which end up at
the Riemann critical zeros at $\Delta=1$, are represented by full symbols.
\item The first three ``non-standard'' curves $\rho_y(\Delta)$ are represented
by open symbols.
Since in the limit $\Delta\to 1^-$ these curves tend to off-critical zeros
with the divergent real component $\rho_x\to -\infty$, the curves end up with
crosses indicating the absence of off-critical zeros at the Riemann's
$\Delta=1$.
\end{itemize}

\begin{figure}[t]
\centering
\includegraphics[clip,width=0.75\textwidth]{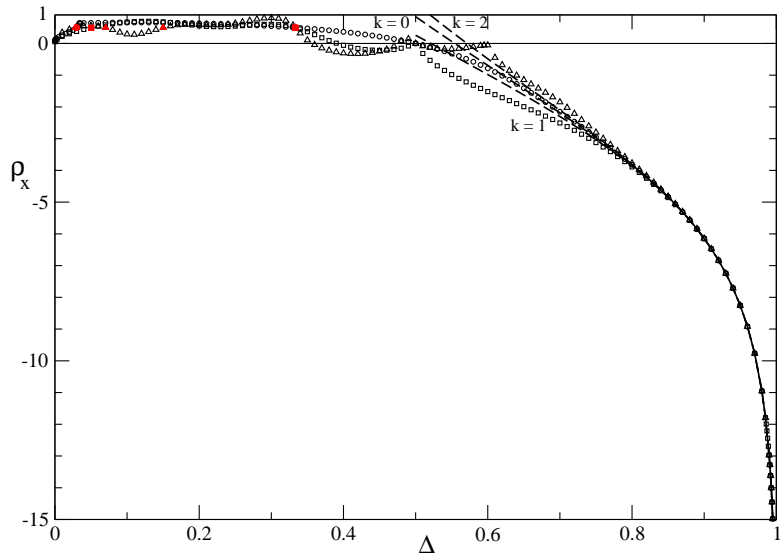}
\caption{The dependence of the real part $\rho_x$ of zeros $E(\rho,\Delta)=0$,
corresponding to the three ``nonstandard''  curves denoted by open symbols
in Figure~\ref{roy}, on the parameter $\Delta\in [0,1]$.
The critical zeros with $\rho_x=\frac{1}{2}$ are denoted by red colour.}
\label{rox}
\end{figure}

The dependence of the real part $\rho_x$ of the first three
``non-standard'' energy zeros on $\Delta\in (0,1]$ is presented
in Figure~\ref{rox} by open symbols, in close analogy with Figure~\ref{roy}.
As before, the critical zeros with $\rho_x=\frac{1}{2}$ are denoted by
red colour. We observe the following:
\begin{itemize}
\item For $\Delta\gtrapprox 0.75$, the three curves coincide
on the considered scale and go to $-\infty$ as $\Delta\to 1^-$.
\end{itemize}

\section{Analytic results in the limit $\Delta\to 1^-$} \label{Sec4}
Approaching $\Delta\to 1^-$, with regard to Eq. (\ref{d1}) one anticipates
the presence of critical zeros of the Riemann zeta function for $E(s,\Delta)$.
Surprisingly, as was already indicated, there are also additional curves of
off-critical zeros.

To derive coordinates of these off-critical zeros, we set
$\Delta=1-\varepsilon$ in (\ref{elin}) and expand the energy in Taylor series
in the small positive $\varepsilon\to 0^+$ up to the order $\varepsilon^5$.

\begin{prop}\label{prop:Taylorepsilon}
Let $s\in \C\backslash \{1\}$, then, as $\varepsilon\to 0^+$,
\begin{align*}
E(s,1-\varepsilon) & = \zeta(s) +  \frac{2^{2+s}-1}{2^{5+s}} s (1+s)
\zeta(2+s) \left( \varepsilon^2 + \varepsilon^3
+ \frac{3}{4} \varepsilon^4 + \frac{1}{2} \varepsilon^5 \right) \\
& + \frac{1}{3} \frac{2^{4+s}-1}{2^{11+s}} s (1+s) (2+s) (3+s)
\zeta(4+s) \left( \varepsilon^4 + 2 \varepsilon^5 \right)
+ O(\varepsilon^6) .
\end{align*}
\end{prop}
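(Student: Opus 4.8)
The plan is to substitute $\Delta=1-\varepsilon$ into the closed form \eqref{elin} and to exploit the fact that the two Hurwitz arguments are reflections of one another about $1/2$. Writing $a:=\frac{1}{1+\Delta}=\frac{1}{2-\varepsilon}$ we have $\frac{\Delta}{1+\Delta}=1-a$, and setting
$$
\delta:=a-\tfrac12=\frac{\varepsilon}{2(2-\varepsilon)}
$$
gives $E(s,1-\varepsilon)=\frac{1}{2^s}\zeta(s)+\frac{1}{2^{s+1}}\big[\zeta(s,\tfrac12+\delta)+\zeta(s,\tfrac12-\delta)\big]$. Since $\delta\to 0$ as $\varepsilon\to 0^+$, the computation reduces to Taylor-expanding the \emph{symmetric} combination $g(\delta):=\zeta(s,\tfrac12+\delta)+\zeta(s,\tfrac12-\delta)$ about $\delta=0$ and then re-expanding each power of $\delta$ in powers of $\varepsilon$.

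First I would record the derivative rule $\partial_a^m\zeta(s,a)=(-1)^m(s)_m\,\zeta(s+m,a)$, where $(s)_m:=s(s+1)\cdots(s+m-1)$, obtained by termwise differentiation of \eqref{Hurwitz} for $\Re(s)>1$. Because $g$ is even in $\delta$, all odd-order terms cancel and
$$
g(\delta)=2\sum_{j\ge 0}\frac{(s)_{2j}}{(2j)!}\,\zeta\!\left(s+2j,\tfrac12\right)\delta^{2j}.
$$
I would then turn every coefficient into an explicit Riemann zeta value by means of the special value \eqref{half}, namely $\zeta(s+2j,\tfrac12)=(2^{s+2j}-1)\zeta(s+2j)$. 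The $j=0$ term contributes $2(2^s-1)\zeta(s)$, and multiplying by $2^{-(s+1)}$ and adding the prefactor $\frac{1}{2^s}\zeta(s)$ collapses to exactly $\zeta(s)$, reproducing the leading term of the claim.

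It then remains to expand powers of $\delta$ in $\varepsilon$. From $\delta=\frac{\varepsilon}{4}(1-\varepsilon/2)^{-1}$ and the series $(1-\varepsilon/2)^{-k}=\sum_{i\ge 0}\binom{k+i-1}{i}(\varepsilon/2)^i$ one gets
$$
\delta^2=\tfrac{1}{16}\big(\varepsilon^2+\varepsilon^3+\tfrac34\varepsilon^4+\tfrac12\varepsilon^5\big)+O(\varepsilon^6),\qquad
\delta^4=\tfrac{1}{256}\big(\varepsilon^4+2\varepsilon^5\big)+O(\varepsilon^6).
$$
Collecting the prefactor $\frac{1}{2^{s+1}}$, the $2$ from the even sum, the ratio $\frac{(s)_{2j}}{(2j)!}$, and the leading coefficients $\frac{1}{16}$ and $\frac{1}{256}$ produces the overall numerical constants $\frac{s(1+s)}{2^{s+5}}$ (for $j=1$) and $\frac{s(1+s)(2+s)(3+s)}{3\cdot 2^{s+11}}$ (for $j=2$); multiplying these by $(2^{s+2}-1)\zeta(s+2)$ and $(2^{s+4}-1)\zeta(s+4)$ respectively, and by the above $\varepsilon$-polynomials, yields precisely the two displayed coefficients. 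The terms $j\ge 3$ contribute only through $\delta^6=O(\varepsilon^6)$ and are absorbed into the remainder.

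The only genuinely non-routine point is to justify that these manipulations survive the analytic continuation, since the statement is for all $s\in\C\setminus\{1\}$. For $\Re(s)>1$ everything is immediate, as \eqref{Hurwitz} converges absolutely and \eqref{half} holds there. For general $s\neq 1$ I would invoke Proposition \ref{prop:analytic}: for each fixed $s\neq 1$ the map $a\mapsto\zeta(s,a)$ is analytic in a neighborhood of $a=\tfrac12$, so $\varepsilon\mapsto E(s,1-\varepsilon)$ is analytic near $\varepsilon=0$ and equals its convergent Taylor series; and since the identity between the degree-$5$ truncation and the claimed polynomial holds on $\{\Re(s)>1\}$ with both sides meromorphic in $s$, it extends to $\C\setminus\{1\}$ by the identity theorem. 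The uniformity needed for the $O(\varepsilon^6)$ remainder follows from local boundedness in $s$ of the analytic coefficient functions $s\mapsto(s)_{2j}(2^{s+2j}-1)\zeta(s+2j)$. The remaining effort is the careful $\varepsilon$-bookkeeping, which is routine but error-prone.
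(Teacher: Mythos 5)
Your proposal is correct, and at bottom it follows the same strategy as the paper's (very terse) proof: Taylor-expand the Hurwitz zeta functions in \eqref{elin} with respect to their shift parameter and transfer the resulting identity to all $s\in\C\setminus\{1\}$ by analytic continuation. The difference is the expansion you use. The paper invokes the series $\zeta(s,a)=a^{-s}+\sum_{n\ge0}(-a)^n\binom{s+n-1}{n}\zeta(s+n)$, i.e.\ an expansion centered at an integer shift, whereas you expand the even combination $\zeta\left(s,\tfrac12+\delta\right)+\zeta\left(s,\tfrac12-\delta\right)$ about $a=\tfrac12$ using $\partial_a^m\zeta(s,a)=(-1)^m(s)_m\,\zeta(s+m,a)$ and then evaluate the coefficients via \eqref{half}. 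Your choice of center buys two things: parity kills all odd orders at once, and each Taylor coefficient is a single Riemann zeta value, which explains structurally why only $\zeta(s+2)$ and $\zeta(s+4)$ appear through order $\varepsilon^5$. By contrast, applying the paper's cited series directly at $a\approx\tfrac12$ produces at each order in $\varepsilon$ an infinite sum over all $\zeta(s+n)$ that must be resummed; the clean way to run the paper's route is to first use \eqref{rel1} to push both arguments near $1$ (e.g.\ $\zeta\left(s,\tfrac12+\delta\right)=2^s\zeta(s,1+2\delta)-\zeta(s,1+\delta)$), after which the two computations coincide term by term. Your handling of the continuation step is sound and in fact slightly more careful than the paper's one-line appeal: the per-order identity extends from $\{\Re(s)>1\}$ by the identity theorem, and the coefficient functions $(s)_{2j}\left(2^{s+2j}-1\right)\zeta(s+2j)$ are actually entire --- the factor $(s+2j-1)$ in $(s)_{2j}$ cancels the pole of $\zeta(s+2j)$ at $s=1-2j$ --- so your word ``meromorphic'' understates the situation, and the validity of the statement at $s=-1$ and $s=-3$ causes no difficulty. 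The arithmetic (the expansions of $\delta^2$ and $\delta^4$, the collapse of the $j=0$ term to $\zeta(s)$, and the constants $2^{-(s+5)}$ and $\tfrac13\,2^{-(s+11)}$) all checks out.
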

\begin{proof}
It directly follows from the Taylor expansion of the Hurwitz zeta function
(see e.g. \cite{Vepstas08}): for $|a|<1$,
\begin{equation}
\zeta(s,a)=\frac{1}{a^s}+\sum_{n=0}^\infty (-a)^n
\binom{s+n-1}{n} \zeta(s+n) ,
\end{equation}
where the binomial coefficient for a complex $s$ has to be understood as
\begin{equation}
\binom{s+n-1}{n} = \frac{s (s+1) (s+2) \cdots (s+n-1)}{n!},
\end{equation}
as well as the analytic continuation of $s\mapsto E(s,\Delta)$.
\end{proof}
It is clear that in the limit $\varepsilon\to 0^+$ the zeros of
$E(s,1-\varepsilon)$ coincide trivially with the critical ones
of the Riemann zeta function $\zeta(s)$.
Let us compute the $\varepsilon\to 0^+$ asymptotics of the other
nontrivial zeros.

\begin{thm}\label{thm:asymptzeros}
The nontrivial off-critical zeros of $E(s,1-\varepsilon)$ are given, as
$\varepsilon\to 0^+$, by $\{ \rho(k)=\rho_x(k)+{\rm i}\rho_y(k)\}_{k\in \Z}$
where
\begin{align}
&\rho_x(k)= \frac{2}{\ln{2}}\ln\varepsilon +
\left( -3 + \frac{2}{\ln 2} \ln \pi \right) + \frac{1}{\ln 2} \varepsilon
\nonumber \\ 
 & + \frac{1}{\ln 2} \left( \frac{1}{4} + \frac{7 \pi^2}{24} \right) \nonumber
\varepsilon^2 + \frac{1}{\ln 2} \left( \frac{1}{12}
+ \frac{7 \pi^2}{24} \right) \varepsilon^3 \nonumber \\
& + \frac{8}{3 \ln 2} \left( \frac{\pi^2}{8} \right)^{\frac{\ln 3}{\ln 2}}  
\cos\left[ \frac{\ln 3}{\ln{2}} (2k+1) \pi \right]
\varepsilon^{2\frac{\ln 3}{\ln 2}}+ o\left(\varepsilon^{2\frac{\ln 3}{\ln 2}} \right),
\label{rhoxk}\\
&\rho_y(k)=\frac{1}{\ln{2}} (2k+1)\pi + \frac{8}{3 \ln 2}
\left( \frac{\pi^2}{8} \right)^{\frac{\ln 3}{\ln 2}}  
\sin\left[ \frac{\ln 3}{\ln{2}} (2k+1) \pi \right]
\varepsilon^{2\frac{\ln 3}{\ln 2}} \\
& + o\left(\varepsilon^{2\frac{\ln 3}{\ln 2}} \right). \label{rhoyk}
\end{align}
In particular,
\begin{enumerate}
\item Vanishing of off-critical zeros: we have
$\displaystyle \lim_{\varepsilon \to 0^+} \rho_x(k)=-\infty$;
\item Asymptotic crystallization of their imaginary parts: at first order,
as $\varepsilon\to 0^+$, the imaginary parts of off-critical zeros are
equidistributed on the lattice $\displaystyle \frac{(2\Z+1)\pi}{\ln 2}$.
\end{enumerate}
\end{thm}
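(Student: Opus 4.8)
The plan is to read Theorem \ref{thm:asymptzeros} off the expansion of Proposition \ref{prop:Taylorepsilon} by treating $E(s,1-\varepsilon)=0$ as a singular perturbation of $\zeta(s)=0$. Writing the vanishing condition as
\[
\frac{\zeta(s)}{s(1+s)\zeta(2+s)}
=-\frac{2^{2+s}-1}{2^{5+s}}\bigl(\varepsilon^2+\varepsilon^3+\tfrac34\varepsilon^4+\tfrac12\varepsilon^5\bigr)
-\frac{B(s)}{s(1+s)\zeta(2+s)}\bigl(\varepsilon^4+2\varepsilon^5\bigr)+O(\varepsilon^6),
\]
with $B(s)=\tfrac13\,\tfrac{2^{4+s}-1}{2^{11+s}}\,s(1+s)(2+s)(3+s)\zeta(4+s)$, and using $\tfrac{2^{2+s}-1}{2^{5+s}}=\tfrac18-\tfrac1{32}2^{-s}$, one sees that the left-hand side is $O(1)$ away from the trivial zeros while the right-hand side can only match it if $2^{-s}$ grows like $\varepsilon^{-2}$. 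Thus the new zeros are precisely those that escape to $\Re(s)\to-\infty$, and this runaway is the mechanism behind their disappearance at $\Delta=1$.

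To analyse that regime I would invoke the functional equation $\zeta(s)=2^s\pi^{s-1}\sin(\tfrac{\pi s}2)\Gamma(1-s)\zeta(1-s)$. Using $\sin(\tfrac{\pi(2+s)}2)=-\sin(\tfrac{\pi s}2)$ together with the shifts $\Gamma(1-s)=s(1+s)\Gamma(-1-s)$ and $\Gamma(-3-s)=\Gamma(-1-s)/[(2+s)(3+s)]$, the two relevant $\zeta$-ratios collapse to
\[
\frac{\zeta(s)}{\zeta(2+s)}=-\frac{s(1+s)}{4\pi^2}\,\frac{\zeta(1-s)}{\zeta(-1-s)},\qquad
\frac{\zeta(4+s)}{\zeta(2+s)}=-\frac{4\pi^2}{(2+s)(3+s)}\,\frac{\zeta(-3-s)}{\zeta(-1-s)}.
\]
Substituting these, the $s(1+s)$ factors cancel and the zero condition becomes a single transcendental equation for $v:=2^{-s}$ whose ingredients are the Dirichlet series $\zeta(1-s)=1+2^{s-1}+3^{s-1}+\cdots$, $\zeta(-1-s)=1+2^{s+1}+3^{s+1}+\cdots$ and $\zeta(-3-s)$. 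On the solution branch $|n^{s}|=n^{\Re(s)}$ behaves like $\varepsilon^{2\ln n/\ln2}$, so the $n=2$ terms enter at order $\varepsilon^2$, the $n=3$ terms at the irrational order $\varepsilon^{2\ln3/\ln2}$, and $n\ge4$ strictly beyond; in particular $\tfrac{\zeta(1-s)}{\zeta(-1-s)}=1-\tfrac32\,2^s-\tfrac83\,3^s+O(\varepsilon^4)$.

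At leading order the dominant balance $\tfrac1{4\pi^2}\approx-\tfrac1{32}\,2^{-s}\varepsilon^2$ forces $v=2^{-s}=2^{-\rho_x}e^{-\mathrm i\rho_y\ln2}$ to be real, negative and of modulus $8/(\pi^2\varepsilon^2)$; the phase condition $e^{-\mathrm i\rho_y\ln2}=-1$ gives $\rho_y=\tfrac{(2k+1)\pi}{\ln2}$ and the modulus gives $\rho_x=\tfrac2{\ln2}\ln\varepsilon+\bigl(-3+\tfrac2{\ln2}\ln\pi\bigr)$. I would then bootstrap: feeding the leading solution back, the integer powers $\varepsilon,\varepsilon^2,\varepsilon^3$ of $\rho_x$ arise from the polynomial $\varepsilon^2+\varepsilon^3+\cdots$, from the subdominant $\tfrac18$, from the $B$-term, and from the real term $-\tfrac32\,2^s$ (real because $e^{\mathrm i\rho_y\ln2}=-1$), while the sole non-integer contribution comes from $-\tfrac83\,3^s=-\tfrac83\bigl(\tfrac{\pi^2}8\bigr)^{\ln3/\ln2}\varepsilon^{2\ln3/\ln2}e^{\mathrm i\rho_y\ln3}$. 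Taking logarithms to recover $s=-\ln v/\ln2$, the overall factor $-1/\ln2$ turns this last term into $\tfrac{8}{3\ln2}\bigl(\tfrac{\pi^2}8\bigr)^{\ln3/\ln2}\varepsilon^{2\ln3/\ln2}e^{\mathrm i\rho_y\ln3}$, whose real and imaginary parts are exactly the $\cos$ correction of \eqref{rhoxk} and the $\sin$ correction of \eqref{rhoyk}. The two ``In particular'' statements then follow from the $\tfrac2{\ln2}\ln\varepsilon$ term and from the first-order value of $\rho_y$.

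The main obstacle is the disciplined bookkeeping of the bootstrap. Because every coefficient on the right carries a factor $2^{-s}\sim\varepsilon^{-2}$, terms that look like high powers of $\varepsilon$ are promoted two orders: the $B$-term, although multiplied by $\varepsilon^4+2\varepsilon^5$, in fact contributes at order $\varepsilon^2$ and must be kept to obtain the $\varepsilon^2$ and $\varepsilon^3$ coefficients of $\rho_x$ (it is precisely what supplies the $\tfrac{7\pi^2}{24}$), whereas the whole $n\ge4$ Dirichlet tail and the $O(\varepsilon^6)$ Taylor remainder must be shown genuinely negligible beyond $o(\varepsilon^{2\ln3/\ln2})$. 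One must also separate real and imaginary parts cleanly, the irrational power producing both a $\cos$ and a $\sin$ because $e^{\mathrm i\rho_y\ln3}$ is genuinely complex, and, for rigour, upgrade the formal inversion to an existence/uniqueness statement for a true zero near each predicted point, for instance by a Rouch\'e or implicit-function argument applied to the master equation for fixed $k$ as $\varepsilon\to0^+$.
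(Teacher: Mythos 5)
Your proposal is correct and takes essentially the same route as the paper's proof: starting from Proposition \ref{prop:Taylorepsilon}, rewriting the zero condition via the functional equation (you use the asymmetric form $\zeta(s)=2^s\pi^{s-1}\sin(\pi s/2)\Gamma(1-s)\zeta(1-s)$, the paper the symmetric one --- an immaterial difference), deducing $\Re(s)\to-\infty$ from the dominant balance $2^{\rho}\sim-\frac{\pi^2}{8}\varepsilon^2$, which yields $\rho_y\in\frac{(2\Z+1)\pi}{\ln 2}$ and the logarithmic divergence of $\rho_x$, and then bootstrapping with $2^{s}\sim\varepsilon^{2}$ and $3^{s}\sim\varepsilon^{2\ln 3/\ln 2}$ exactly as in equations (\ref{cruc})--(\ref{rhox}) of the paper. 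If anything, you are more explicit than the paper about the remaining rigor (control of the $n\ge 4$ Dirichlet tails and the $O(\varepsilon^6)$ remainder, plus a Rouch\'e or implicit-function argument for genuine existence of a zero near each predicted point), steps the paper's ``simple algebra'' leaves implicit.
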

\begin{proof}
The other nontrivial zeros $\{ \rho\}$, besides the one of the Riemann
zeta function, correspond to solutions of the equation
\begin{eqnarray} 
2^{\rho} & = & \frac{1-2^{2+\rho}}{2^5}
\frac{\rho (1+\rho) \zeta(\rho+2)}{\zeta(\rho)}
\left( \varepsilon^2 + \varepsilon^3 + \frac{3}{4} \varepsilon^4
+ \frac{1}{2} \varepsilon^5 \right) \nonumber \\ & &
+ \frac{1}{3} \frac{1-2^{4+\rho}}{2^{11}}
\frac{\rho (1+\rho) (2+\rho) (3+\rho) (4+\rho) \zeta(4+\rho)}{\zeta(\rho)}
\left( \varepsilon^4 + 2 \varepsilon^5 \right)
\nonumber \\ & & + O(\varepsilon^6) . \label{crucial}
\end{eqnarray}  
As will be showed later, the component $\rho_x$ of $\rho=\rho_x+{\rm i}\rho_y$
goes to $-\infty$ as $\varepsilon\to 0^+$.
To simplify our computations, one applies the well known duality transformation
\begin{equation}
\pi^{-s/2} \Gamma\left( \frac{s}{2} \right) \zeta(s) =
\pi^{(s-1)/2} \Gamma\left( \frac{1-s}{2} \right) \zeta(1-s)  
\end{equation}
to each Riemann zeta function in (\ref{crucial}).
%\begin{eqnarray}
%2^{\rho} & = & \frac{1-2^{2+\rho}}{2^5} \frac{\rho (1+\rho) \pi^2
%\Gamma\left(-\frac{1+\rho}{2}\right) \Gamma\left(\frac{\rho}{2}\right)}{
%\Gamma\left(\frac{1-\rho}{2}\right) \Gamma\left(\frac{\rho}{2}+1\right)}
%\frac{\zeta(-1-\rho)}{\zeta(1-\rho)} \left( \varepsilon^2 + \varepsilon^3
%+ \frac{3}{4} \varepsilon^4 + \frac{1}{2} \varepsilon^5 \right) \nonumber \\
%& & + \frac{1}{3} \frac{1-2^{4+\rho}}{2^{11}}
%\frac{\rho (1+\rho) (2+\rho) (3+\rho) \pi^4
%\Gamma\left(-\frac{3+\rho}{2}\right) \Gamma\left(\frac{\rho}{2}\right)}{
%\Gamma\left(\frac{1-\rho}{2}\right) \Gamma\left(\frac{\rho}{2}+2\right)}
%\frac{\zeta(-3-\rho)}{\zeta(1-\rho)} \nonumber \\ & & \times
%\left( \varepsilon^4 + 2 \varepsilon^5 \right) + O(\varepsilon^6) .  
%\end{eqnarray}  
Using then the formula $\Gamma(x+1) = x \Gamma(x)$,
one ends up with the result
\begin{eqnarray} 
2^{\rho} & = & - \frac{1-2^{2+\rho}}{2^3} \pi^2
\frac{\zeta(-1-\rho)}{\zeta(1-\rho)}
\left( \varepsilon^2 + \varepsilon^3 + \frac{3}{4} \varepsilon^4
+ \frac{1}{2} \varepsilon^5 \right) \nonumber \\ & &
+ \frac{1}{3} \frac{1-2^{4+\rho}}{2^7} \pi^4 \frac{\zeta(-3-\rho)}{\zeta(1-\rho)}
\left( \varepsilon^4 + 2 \varepsilon^5 \right) + O(\varepsilon^6) .
\label{cruc}  
\end{eqnarray}
In the limit $\varepsilon\to 0^+$, the r.h.s. of this equation vanishes
and, consequently, the component $\rho_x$ of $\rho=\rho_x+{\rm i}\rho_y$
must go to $-\infty$ as indicated before.
In the limit $\rho_x\to -\infty$, the ratios of Riemann zeta functions
in (\ref{cruc}) can be expanded as follows
\begin{equation}
\frac{\zeta(-1-\rho)}{\zeta(1-\rho)} =
\frac{1+2^{\rho+1} + 3^{\rho+1}+\sum_{k\geq 4}k^{\rho+1}}{1 + 2^{\rho-1}
+ 3^{\rho-1}+\sum_{k\geq 4}k^{\rho-1}} = 1 + \frac{3}{2} 2^{\rho}
+ \frac{8}{3} 3^{\rho} + O(4^{\rho})  
\end{equation}
and
\begin{equation}
\frac{\zeta(-3-\rho)}{\zeta(1-\rho)} =
\frac{1+2^{\rho+3} + 3^{\rho+3}+\sum_{k\geq 4}k^{\rho+3}}{1 + 2^{\rho-1}
+ 3^{\rho-1}+\sum_{k\geq 4}k^{\rho-1}} = 1 + \frac{15}{2} 2^{\rho}
+ O(3^{\rho}) .
\end{equation}

In the leading order of the smallness parameter $\varepsilon$, it holds that
\begin{equation} \label{leading}
2^{\rho_x+{\rm i}\rho_x} = - \frac{\pi^2}{8} \varepsilon^2 +o(\varepsilon^2),
\end{equation}
Since the right-hand side of this equation is real and negative,
the leading order of the $\rho_y$-component is given by
$2^{{\rm i}\rho_y} = -1 + o(1)$, or, equivalently,
\begin{equation} \label{rhoy}
\rho_y(k) = \frac{1}{\ln{2}} (2k+1) \pi + o(1) , \qquad k\in \Z.
\end{equation}
This means that in the limit $\varepsilon\to 0^+$ there exists an infinite
sequence of equidistant zero components along the $\rho_y$ axis. 
As follows from (\ref{leading}), the $x$-component of these zeros diverges
logarithmically as $\varepsilon\to 0^+$:
\begin{equation} \label{rhox}
\rho_x(k) = \frac{2}{\ln{2}}\ln\varepsilon + \left( -3 + \frac{2}{\ln 2}
\ln \pi \right) + O(\varepsilon).
\end{equation}
Note that the leading terms are the same for any value of $k$.
This behavior can be seen in Figure~\ref{rox}.

Higher orders of the expansion of $\rho_y(k)$ and $\rho_x(k)$ 
in $\varepsilon$ can be obtained by inserting the leading order expressions
(\ref{rhoy}) and (\ref{rhox}) directly into the basic relation (\ref{cruc}).
Performing the expansion procedure in $\varepsilon$ it is important to
realize that
\begin{equation}
3^s =
\left( \frac{\pi^2}{8} \varepsilon^2 \right)^{\frac{\ln 3}{\ln 2}}
\exp\left[ {\rm i} \frac{\ln 3}{\ln 2} (2k+1) \pi \right]
+ o\left(\varepsilon^{2\frac{\ln 3}{\ln 2}}\right)
\end{equation}  
is of order $2\ln 3/\ln 2 \approx 3.17>3$.
After simple algebra one obtains the desired asymptotics for $\rho_x(k)$
and $\rho_y(k)$.
%\begin{equation} \label{rhoyk}
%\rho_y(k) = \frac{1}{\ln{2}} (2k+1)\pi + \frac{8}{3 \ln 2}
%\left( \frac{\pi^2}{8} \right)^{\frac{\ln 3}{\ln 2}}  
%\sin\left[ \frac{\ln 3}{\ln{2}} (2k+1) \pi \right]
%\varepsilon^{2\frac{\ln 3}{\ln 2}} + o\left(\varepsilon^{2\frac{\ln 3}{\ln 2}} \right)
%\end{equation}
%and
%\begin{eqnarray}
%\rho_x(k) & = & \frac{2}{\ln{2}}\ln\varepsilon +
%\left( -3 + \frac{2}{\ln 2} \ln \pi \right) + \frac{1}{\ln 2} \varepsilon
%\\ & & + \frac{1}{\ln 2} \left( \frac{1}{4} + \frac{7 \pi^2}{24} \right)
%\varepsilon^2 + \frac{1}{\ln 2} \left( \frac{1}{12}
%+ \frac{7 \pi^2}{24} \right) \varepsilon^3 \nonumber \\
%& & + \frac{8}{3 \ln 2} \left( \frac{\pi^2}{8} \right)^{\frac{\ln 3}{\ln 2}}  
%\cos\left[ \frac{\ln 3}{\ln{2}} (2k+1) \pi \right]
%\varepsilon^{2\frac{\ln 3}{\ln 2}} + o\left(\varepsilon^{2\frac{\ln 3}{\ln 2}}\right) . \label{rhoxk}
%\end{eqnarray}  
\end{proof}

\begin{figure}[t]
\centering
\includegraphics[clip,width=0.75\textwidth]{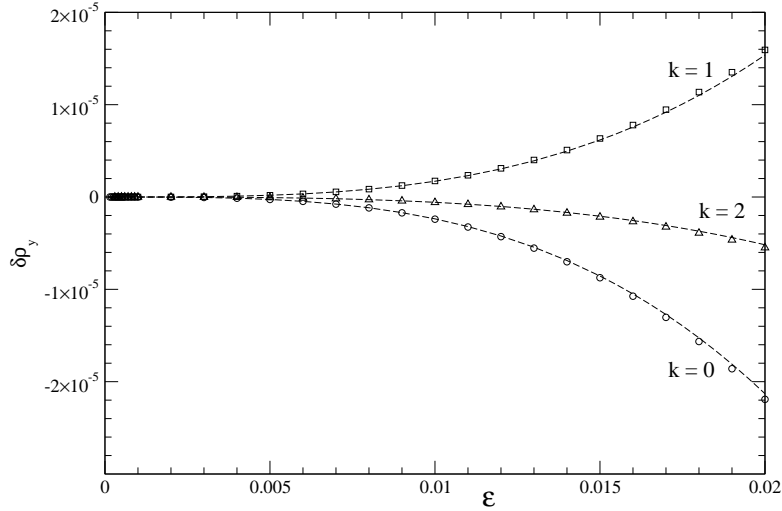}
\caption{Imaginary parts of the first three $(k=0,1,2)$ off-critical zeros
$E(\rho,\Delta)=0$.
The deviation $\delta\rho_y(k)$ is defined by (\ref{deltarhoy1}).
The numerical results are depicted by open circles $(k=0)$, squares $(k=1)$
and triangles $(k=2)$.
The plots of $\delta\rho_y(k)$ in the region of the small anisotropy parameter
$\varepsilon=1-\Delta$, anticipated to behave according to the asymptotic
formula (\ref{deltarhoy2}), are represented by the corresponding dashed curves.}
\label{roy-eps}
\end{figure}

\textbf{Comparison with our numerics.}
To check numerically our expansion in $\varepsilon$ for the imaginary parts of
the first three $(k=0,1,2)$ off-critical zeros, let us define the deviations
from their $\varepsilon=0$ values as follows
\begin{equation} \label{deltarhoy1}
\delta\rho_y(k) := \rho_y(k) - \frac{1}{\ln{2}} (2k+1)\pi .
\end{equation}
We know from \eqref{rhoyk} that the deviations are expected to behave in
the region of the small anisotropy parameter $\varepsilon\to 0^+$ as
\begin{equation} \label{deltarhoy2}
\delta\rho_y(k) = \frac{8}{3 \ln 2}
\left( \frac{\pi^2}{8} \right)^{\frac{\ln 3}{\ln 2}}  
\sin\left[ \frac{\ln 3}{\ln{2}} (2k+1) \pi \right]
\varepsilon^{2\frac{\ln 3}{\ln 2}} +o\left(\varepsilon^{2\frac{\ln 3}{\ln 2}} \right).
\end{equation}
The numerical results for $\delta\rho_y(k)$ are depicted by open circles
$(k=0)$, squares $(k=1)$ and triangles $(k=2)$ in Figure~\ref{roy-eps}.
It is seen that the numerical data fit perfectly the plots given by the
asymptotic formula (\ref{deltarhoy2}), represented by dashed curves,
for small values of $\varepsilon\leq 0.02$. 

\begin{figure}[t]
\centering
\includegraphics[clip,width=0.75\textwidth]{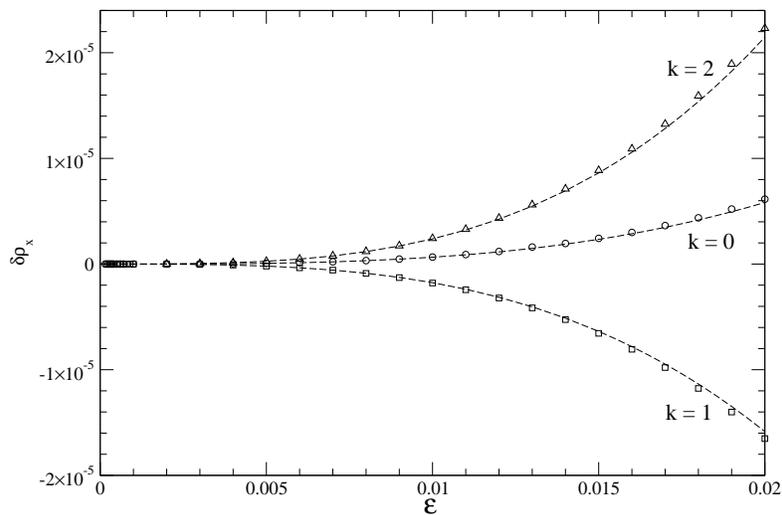}
\caption{Real parts of the first three $(k=0,1,2)$ off-critical zeros
$E(\rho,\Delta)=0$.
The deviation $\delta\rho_x(k)$ is defined by (\ref{deltarhox1}).
The numerical results are depicted by open circles $(k=0)$, squares $(k=1)$
and triangles $(k=2)$.
The plots of $\delta\rho_x(k)$ in the region of the small anisotropy parameter
$\varepsilon=1-\Delta$, anticipated to behave according to the asymptotic
formula (\ref{deltarhox2}), are represented by the corresponding dashed curves.}
\label{rox-eps}
\end{figure}

As concerns the real parts of the first three $(k=0,1,2)$ off-critical zeros,
we define the deviations as follows
\begin{eqnarray} 
\delta\rho_x(k) & := & \rho_x(k) -\frac{2}{\ln{2}}\ln\varepsilon -
\left( -3 + \frac{2}{\ln 2} \ln \pi \right) -\frac{1}{\ln 2} \varepsilon
\nonumber \\ & &
- \frac{1}{\ln 2} \left( \frac{1}{4} + \frac{7 \pi^2}{24} \right)
\varepsilon^2 - \frac{1}{\ln 2} \left( \frac{1}{12}
+ \frac{7 \pi^2}{24} \right) \varepsilon^3 . \label{deltarhox1}
\end{eqnarray}
It is obvious from (\ref{rhoyk}) that the deviations are anticipated
to behave for small values of anisotropy $\varepsilon\to 0^+$ as
\begin{equation} \label{deltarhox2}
\delta\rho_x(k) = \frac{8}{3 \ln 2}
\left( \frac{\pi^2}{8} \right)^{\frac{\ln 3}{\ln 2}}  
\cos\left[ \frac{\ln 3}{\ln{2}} (2k+1) \pi \right]
\varepsilon^{2\frac{\ln 3}{\ln 2}} +o\left( \varepsilon^{2\frac{\ln 3}{\ln 2}}\right).
\end{equation}  
The numerical results for $\delta\rho_x(k)$ are represented by open circles
$(k=0)$, squares $(k=1)$ and triangles $(k=2)$ in Figure~\ref{rox-eps}.
The numerical data fit very well the plots deduced from the
asymptotic formula (\ref{deltarhoy2}) (dashed curves).

\section*{Acknowledgements}
The support received from VEGA Grant No. 2/0092/21 
and Project EXSES APVV-20-0150 is acknowledged.

\bigskip

\end{document}